\newtheorem{theorem}{Theorem}[section]
\newtheorem{proposition}[theorem]{Proposition}
\newtheorem{remark}[theorem]{Remark}
\newtheorem{definition}[theorem]{Definition}
\numberwithin{equation}{section}
\begin{document}

\title{A remark on weak tracial approximation}


\author{Xiaochun Fang}
\address{School of Mathematical sciences, Tongji University, Shanghai, 200092, China}
\curraddr{}
\email{xfang@tongji.edu.cn}
\thanks{}

\author{Junqi Yang*}
\address{School of Mathematical sciences, Tongji University, Shanghai, 200092, China}
\curraddr{}
\email{yangjq24@tongji.edu.cn}
\thanks{}

\subjclass[2010]{Primary: 58F15, 58F17; Secondary: 53C35.}

\keywords{Tracial approximation, Large subalgebras, C*-algebras, Functional analysis}

\date{}

\dedicatory{}

\begin{abstract}
	In this paper, we point out that the definition of weak tracial approximation can be improved and strengthened.
	An example of weak tracial approximation is also provided.
\end{abstract}

\maketitle

\section{Introduction}

Before Elliott and Niu in \cite{EN2008tracial} introduced the notion of tracial approximation
by abstract classes of unital $C^*$-algebras,
tracially approximately finite-dimensional (TAF) algebras
and tracially approximately interval (TAI) algebras is considered by Lin
\cite{Lin2001tracial}, \cite{Lin2001tracially}.
The idea here is that if $A$ can be well-approximated by well-behaved algebras in trace,
then we could expect that $A$ is well-behaved too.
Many properties can be inherited from the given class to tracially approximated $C^*$-algebras.
In \cite{EN2008tracial}, finiteness (stably finiteness), being stable rank one, 
having nonempty tracial state space,
the property that the strict order on projections is determined by traces,
the property that any state on the $K_0$ group comes from a tracial state
is considered.
Elliott, Fan and Fang in \cite{EFF2018certain} investigated the inherence of several comparison properties and divisibility.

Phillips introduced large subalgebras in \cite{Phillips2014Large},
which is an abstraction of Putnam's orbit breaking subalgebra of the crossed product
of the Cantor set by a minimal homeomorphism in \cite{Putnam1989Cstar}.
In \cite{Phillips2014Large}, Phillips studied properties such as stably finite,
pure infinite,
the restriction map between trace state spaces,
and the relationship between their radius comparison.
Later Archey and Phillips in \cite{AP2020permanence} introduced the centrally large subalgebras
and proved that the property stable rank one can be inherited.
Zhao, Fang and Fan proved that the property real rank zero can also be inherited
in \cite{ZFF2019Permanence}.
In \cite{ABP2016Centrally}, Archey, Buck and Phillips studied the property of tracial $\mathcal{Z}$-absorbing when $A$ is stably finite.
Fan, Fang and Zhao investigated several comparison properties in \cite{FFZ2019Comparison},
and they also studied the inheritance of the local weak comparison \cite{ZFF2020Permanence},
the almost divisibility \cite{FFZ2021divisibility}
and the inheritance of tracial nuclear dimension
for centrally large subalgebras \cite{ZFF2020inherence}.

Recently Elliott, Fan and Fang are trying to put tracial approximation and large subalgebras together, to find the common inheritance properties
and offer a more flexible framework than tracial approximation and large subalgebras.
Tracially $\mathcal{Z}$-absorbing,
nuclear dimension, property (SP) and $m$-almost divisibility have been studied 
in their work \cite{EFF2022Generalized} by far.

In this paper, we point out that the definition of weak tracial approximation can be improved and strength.
A known example of weak tracial approximation is proved in Section.~\ref{example_sec}.

\section{Preliminaries}

Elliott, Fan and Fang introduced a class of generalized tracial approximation $C^*$-algebras in \cite[Def.~1.1]{EFF2022Generalized}.
They defined as follows the class of $C^*$-algebras which can be weakly tracially approximated by $C^*$-algebras in $\Omega$ and denote this class by $\mathrm{WTA\Omega}$.

\begin{definition}\label{def_WTAOmega}
	Let $\Omega$ be a class of unital $C^*$-algebras.
	A simple unital $C^*$-algebra $A$ is said to belong to the class $\mathrm{WTA\Omega}$ if,
	for any $\epsilon > 0$, any finite subset $F \subset A$, and any non-zero element $a \in A_+$,
	there exists a $C^*$-subalgebra $B \subset A$ with $B \in \Omega$,
	a projection $p \in A$ with $p = 1_B$,
	an element $g \in B_+$ with $0 \leq g \leq 1$, such that
	\begin{enumerate}[(1)]
		\item $(p-g)x \in_{\epsilon} B$, $x(p-g) \in_{\epsilon} B$ for all $x \in F$,
		\item $\| (p-g)x - x(p-g)\| < \epsilon$ for all $x \in F$,
		\item $1_A - (p-g) \precsim_A a$, and
		\item $\| (p-g)a(p-g)\| \geq \|a\| - \epsilon$.
	\end{enumerate}
\end{definition}

Note that $p-g$ always appears as a whole in the definition above.
We use a new variable $r = p-g$ and a different notation ${\rm GTA}\Omega$ to represent possible
non-unital $\Omega$ or non-unital $A$.
The idea of condition (3') comes from \cite[Def.~1.1]{FG2017weak}.

\begin{definition}\label{def_GTAOmega}
	Let $\Omega$ be a class of $C^*$-algebras.
	A simple $C^*$-algebra $A$ is said to belong to the class $\mathrm{GTA\Omega}$ if,
	for any $\epsilon > 0$, any finite subset $F \subset A$, and any positive elements $a \in A_+$ with $\|a\| = 1$
	and $b \in A_+$ with $\|b\| \leq 1$,
	there exists a $C^*$-subalgebra $B \subset A$ with $B \in \Omega$,
	an element $r \in B_+$ with $\|r\| \leq 1$, such that
	\begin{enumerate}[(1')]
		\item $rx \in_{\epsilon} B$, $xr \in_{\epsilon} B$ for all $x \in F$,
		\item $\| rx - xr\| < \epsilon$ for all $x \in F$,
		\item $(b^2 - brb -\epsilon)_+ \precsim_A a$, and
		\item $\| rar\| \geq 1 - \epsilon$.
	\end{enumerate}
\end{definition}

\begin{proposition}
	If $A$ and all algebras in $\Omega$ are unital,
	then $A \in {\rm WTA}\Omega$ is equivalent to $A \in {\rm GTA}\Omega$.
\end{proposition}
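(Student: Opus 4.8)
The plan is to read off the two definitions from one another under the substitution $r=p-g$. If $g\in B_+$ satisfies $0\le g\le 1$ then, since $g$ lies in the unital algebra $B$, we have $g\le 1_B=p$, so $r=p-g$ is a positive contraction in $B$; conversely a positive contraction $r\in B$ is recovered via $g=1_B-r$. Under this dictionary (1)/(1$'$) and (2)/(2$'$) are literally identical, and (4)/(4$'$) differ only in that (4$'$) assumes $\|a\|=1$, which is immaterial because $\|(p-g)a(p-g)\|=\|a\|\cdot\|(p-g)(a/\|a\|)(p-g)\|$. So everything comes down to comparing (3) and (3$'$).

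For $\mathrm{WTA}\Omega\Rightarrow\mathrm{GTA}\Omega$ I would take GTA-data $\epsilon,F,a,b$ with $\|a\|=1$, $\|b\|\le 1$, apply the WTA property to $\epsilon,F,a$, and set $r=p-g$. Then $1_A-r\ge 0$, and writing $c:=(1_A-r)^{1/2}b$ we get $b^2-brb=b(1_A-r)b=c^{*}c\sim cc^{*}=(1_A-r)^{1/2}b^2(1_A-r)^{1/2}\le 1_A-r$, using $b^2\le 1_A$; hence $(b^2-brb-\epsilon)_+\precsim_A b^2-brb\precsim_A 1_A-r\precsim_A a$ by (3), which is (3$'$). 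Conditions (1$'$),(2$'$),(4$'$) are immediate. This direction is routine.

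The substance is $\mathrm{GTA}\Omega\Rightarrow\mathrm{WTA}\Omega$, and the obstacle is precisely that (3$'$) yields only the approximate comparison $(b^2-brb-\epsilon)_+\precsim_A a$ while (3) demands the exact subequivalence $1_A-r\precsim_A a$. Given WTA-data $\epsilon,F,a$ with $a\ne 0$, I would apply the GTA property with a tolerance $\delta>0$ (fixed at the end, with $\delta<\tfrac12$), the same $F$, the normalised element $a'=a/\|a\|$, and $b=1_A$; this gives $B\in\Omega$ and a positive contraction $r\in B$ with $rx,xr\in_\delta B$, $\|rx-xr\|<\delta$ for $x\in F$, with $(1_A-r-\delta)_+\precsim_A a'$, and with $\|ra'r\|\ge 1-\delta$. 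The idea is to use, in place of $r$, the element $\tilde r:=f(r)$, where $f\colon[0,1]\to[0,1]$ is continuous with $f(0)=0$, $f(t)=2t$ on $[0,\tfrac12]$ and $f\equiv 1$ on $[\tfrac12,1]$; then put $p=1_B$ (a projection of $A$) and $g:=1_B-\tilde r=(1-f)(r)\in B_+$, so that $0\le g\le 1$ and $p-g=\tilde r$.

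Verifying (1)--(4) for $(B,p,g)$: For (3), since $1-f$ vanishes on $[\tfrac12,1]$ while $(\tfrac12-\delta)^{-1}(1-t-\delta)_+\ge 1$ on $[0,\tfrac12]$, we get $(1-f)(t)\le(\tfrac12-\delta)^{-1}(1-t-\delta)_+$ on all of $[0,1]$, hence the \emph{honest} domination $g=(1-f)(r)\precsim_A(1_B-r-\delta)_+$ --- this is the step that removes the cut. Since $1_A-1_B$ is a projection orthogonal to $B$, one has the orthogonal decompositions $1_A-\tilde r=(1_A-1_B)\oplus g$ and $(1_A-r-\delta)_+=(1-\delta)(1_A-1_B)\oplus(1_B-r-\delta)_+$; using $1_A-1_B\sim(1-\delta)(1_A-1_B)$ and the compatibility of $\precsim_A$ with orthogonal sums, $1_A-\tilde r\precsim_A(1_A-r-\delta)_+\precsim_A a'\sim_A a$, which is (3). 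For (4), $f(t)\ge t$ on $[0,1]$ gives $f(t)^2\ge t^2$, so $\tilde r^2\ge r^2$, $a^{1/2}\tilde r^2a^{1/2}\ge a^{1/2}r^2a^{1/2}$, and therefore $\|\tilde r a\tilde r\|=\|a^{1/2}\tilde r^2a^{1/2}\|\ge\|a^{1/2}r^2a^{1/2}\|=\|rar\|=\|a\|\,\|ra'r\|\ge\|a\|(1-\delta)$. For (1) and (2), write $f(t)=tf_1(t)$ with $f_1$ continuous and $\|f_1\|_\infty=2$: then $\tilde r x=f_1(r)(rx)$ and $x\tilde r=(xr)f_1(r)$ lie within $2\delta$ of $B$, and approximating $f,f_1$ by polynomials shows $\|\tilde r x-x\tilde r\|\le 2\delta+\|f_1(r)x-xf_1(r)\|\to 0$ as $\delta\to 0$. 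Finally one fixes $\delta$ small enough that $2\delta<\epsilon$, $\|a\|\,\delta\le\epsilon$, and the functional-calculus modulus governing (2) is $<\epsilon$, which yields $A\in\mathrm{WTA}\Omega$. The only non-bookkeeping point in the whole argument is the removal of the $\epsilon$-cut, achieved by the spectral modification $r\mapsto f(r)$.
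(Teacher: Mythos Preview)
Your proof is correct and follows the same strategy as the paper: for $\mathrm{GTA}\Omega\Rightarrow\mathrm{WTA}\Omega$, replace $r$ by a spectral modification $f(r)$ so that the cut-down condition $(1_A-r-\epsilon')_+\precsim_A a$ upgrades to the exact $1_A-f(r)\precsim_A a$. The differences are cosmetic. The paper takes $f$ linear on $[0,1-\epsilon_1]$ and equal to $1$ on $[1-\epsilon_1,1]$, which yields the \emph{identity} $1-f(t)=\frac{1}{1-\epsilon_1}(1-t-\epsilon_1)_+$; thus $1_A-f(r)\sim_A(1_A-r-\epsilon_1)_+\precsim_A a$ in one line, with no need to compare $1_A$ and $1_B$ or to invoke an orthogonal decomposition. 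Your choice $f(t)=2t$ on $[0,\tfrac12]$ gives only the inequality $(1-f)(t)\le C(1-t-\delta)_+$, which still suffices (both sides lie in the commutative algebra $C^*(1_A,r)$, where $\le$ does imply $\precsim$), but the detour through $1_A-\tilde r=(1_A-1_B)\oplus g$ is unnecessary: one may argue directly in $C^*(1_A,r)$. For condition~(4) your monotonicity argument $\tilde r^2\ge r^2$ is arguably cleaner than the paper's $\|f(r)-r\|$ estimate. The $\mathrm{WTA}\Omega\Rightarrow\mathrm{GTA}\Omega$ direction is handled the same way in both, via $b(1_A-r)b\sim (1_A-r)^{1/2}b^2(1_A-r)^{1/2}\in\overline{(1_A-r)A(1_A-r)}$.
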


\begin{proof}
	If $A \in {\rm WTA}\Omega$, apply Definition~\ref{def_WTAOmega} with
	$(\epsilon, F, a)$ for any given $(\epsilon, F, a, b)$,
	getting corresponding $B, p$ and $g$.
	Then $r = p-g$ satisfies \begin{align*}
		(b^2 - brb - \epsilon)_+ & \precsim_A b(1 - r - \epsilon / \|b\|^2)_+ b
		\precsim_A (1 - r - \epsilon)_+ \precsim_A 1-r \precsim_A a \,,
	\end{align*}
	where the first $\precsim_A$ is from \cite[Lem.~2.3]{FG2017weak}.
	The rest condition is trivial.
	
	Now suppose that $A \in {\rm GTA}\Omega$.
	For any given $(\epsilon, F,a)$, set $\epsilon_1 = \epsilon / m$,
	where $m = 4M$ and $M = \max\{\|x\| : x \in F\} \vee 1$.
	Let \[
		f(t) = \begin{cases}
			0 , & \quad t \leq 0 ;\\
			\text{linear} , & \quad 0 \leq t \leq 1 - \epsilon_1 ;\\
			1 , & \quad t \geq 1 - \epsilon_1 \,.
		\end{cases}
	\]
	Then $|f(t) - t| \leq \epsilon_1$ for $t \in [0,1]$.
	Find $\delta > 0$ such that if $\|hc - ch \| < \delta$ for any $C^*$-algebra $C$ with
	$c,h \in C$, $\|c\| \leq M$ and $0 \leq h \leq 1$,
	then $\| f(h)c - cf(h)\|<\epsilon$.
	
	Applying Definition~\ref{def_GTAOmega} with $\epsilon' = \epsilon_1 \wedge \delta$,
	$b = 1_A$ and the same $F$ and $a$,
	we get a $C^*$-subalgebra $B \subset A$ in $\Omega$ and $r \in B$ with $0 \leq r \leq 1$
	such that $rx \in_{\epsilon / 2} B$.
	Combining with $rx \approx_{\epsilon_1} f(r)x$, we have $f(r)x \in_{\epsilon} B$ and similarly
	$xf(r) \in_{\epsilon} B$ for all $x \in F$.
	Note that $\| f(r)x - xf(r)\| < \epsilon$ is also true by our choice of $\delta$ and $\epsilon'$.
	
	We next prove condition (3) with $p = 1_B$ and $g = p - f(r)$.
	Note that \[
	1 - f(r) = \frac{1}{1 - \epsilon_1} (1 - r - \epsilon_1)_+
	\sim_A (1 - r - \epsilon_1)_+ \precsim_A (1 - r - \epsilon')_+ \precsim_A a \,,
	\]
	where the first $\precsim_A$ is from \cite[Lem.~2.7(i)]{KR2000nonsimple}
	and the last $\precsim_A$ is from condition (3').
	
	Finally, $f(r)af(r) \approx_{\epsilon_1} raf(r) \approx_{2\epsilon_1} rar$.
	So we have $\|f(r)af(r)\| \geq \| rar\| - 2 \epsilon_1 \geq 1 - 3 \epsilon_1 > 1 - \epsilon$.
\end{proof}

The following proposition showed that
when $A$ is finite, condition (4) is implied by condition (3) in Defintion~\ref{def_GTAOmega}.
This gives a proof that when a unital $C^*$-algebra $A$ is finite and $A \in {\rm TA}\Omega$,
then $A \in {\rm GTA}\Omega$.

\begin{proposition}
	Let $\Omega$ be a class of $C^*$-algebras.
	Let $A$ be a finite infinite dimensional simple unital $C^*$-algebra.
	Suppose that for any $\epsilon>0$,
	any finite set $F \subset A$, and any non-zero element $a \in A_+$,
	there exists a $C^*$-subalgebra $B \subset A$ with $B \in \Omega$,
	an element $r \in B_+$ with $\|r\| \leq 1$,
	such that \begin{enumerate}
		\item $rx \in_{\epsilon} B$, $xr \in_{\epsilon} B$ for all $x \in F$,
	\item $\| rx - xr\| < \epsilon$ for all $x \in F$, and
	\item $1_A - r \precsim_A a$.
	\end{enumerate}
	Then $A$ belongs to the class $\mathrm{GTA}\Omega$.
\end{proposition}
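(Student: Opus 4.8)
The plan is to witness $A\in\mathrm{GTA}\Omega$ by applying the hypothesis \emph{not} to the given $a$ but to a small positive element supported inside the ``top'' of $a$. Concretely, given $\epsilon>0$ (which we may assume is $<1$, since otherwise condition (4$'$) is vacuous), a finite set $F\subset A$, and positive elements $a$ with $\|a\|=1$ and $b$ with $\|b\|\le 1$, set $d=(a-1+\epsilon)_+$; this is nonzero because $1\in\mathrm{sp}(a)$. The hereditary subalgebra $D:=\overline{dAd}$ is infinite dimensional: a finite-dimensional hereditary subalgebra would contain a minimal projection, which is then a minimal projection of $A$, forcing $A\cong M_n(\mathbb C)$. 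Hence $D$ contains two nonzero orthogonal positive elements $a_1\perp a_2$. I would apply the hypothesis to $(\epsilon,F,a_1)$, obtaining $B\in\Omega$ and $r\in B_+$ with $\|r\|\le 1$ satisfying (1), (2) and $1_A-r\precsim_A a_1$, and claim this $B,r$ works for $(\epsilon,F,a,b)$. Conditions (1$'$),(2$'$) are literally (1),(2). For (3$'$), writing $(b^2-brb-\epsilon)_+=(b(1_A-r)b-\epsilon)_+$ and using the routine facts $(y-\epsilon)_+\precsim_A y$, $vyv^*\precsim_A y$, together with $a_1\in D$ and $d\le a$, gives
\[
(b^2-brb-\epsilon)_+\precsim_A b(1_A-r)b\precsim_A 1_A-r\precsim_A a_1\precsim_A d\precsim_A a .
\]

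The real content is (4$'$), i.e.\ $\|rar\|\ge 1-\epsilon$, and the key lemma I would isolate is: \emph{if $\|rar\|<1-\epsilon$ then $d=(a-1+\epsilon)_+\precsim_A 1_A-r$}. For this, note $\|rar\|=\|a^{1/2}r^2a^{1/2}\|<1-\epsilon$, so $a^{1/2}r^2a^{1/2}\le(1-\epsilon)1_A$ and hence $a^{1/2}(1_A-r^2)a^{1/2}=a-a^{1/2}r^2a^{1/2}\ge a-1+\epsilon$. Cutting down by $c:=d^{1/2}$ and using that $c$ commutes with $a$, one gets $d^2=c(a-1+\epsilon)c\le (ca^{1/2})(1_A-r^2)(ca^{1/2})^*$, so with $vyv^*\precsim_A y$, $1_A-r^2\le 2(1_A-r)$, and $d^2\sim_A d$,
\[
d\sim_A d^2\precsim_A(ca^{1/2})(1_A-r^2)(ca^{1/2})^*\precsim_A 1_A-r^2\precsim_A 1_A-r .
\]
This is the only place the norm hypothesis is turned into a Cuntz-comparison statement.

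To finish (4$'$), suppose $\|rar\|<1-\epsilon$. The lemma and $1_A-r\precsim_A a_1$ give $d\precsim_A a_1$, while $a_1\in D=\overline{dAd}$ gives $a_1\precsim_A d$, so $a_1\sim_A d$. Since $a_1\perp a_2$ and $a_1+a_2\in D$, the class of $a_1\oplus a_2$ (Cuntz comparison in $M_2(A)$) is $\le$ that of $d\sim_A a_1$; thus $a_1$ absorbs the nonzero class of $a_2$. By simplicity $1_A\precsim_A a_2^{\oplus m}$ for some $m$, so $a_1\oplus a_1\precsim_A a_1\oplus a_2^{\oplus m}\precsim_A a_1$, making $a_1$ properly infinite, and then $1_A\precsim_A a_1^{\oplus m'}\precsim_A a_1\precsim_A 1_A$ shows $1_A$ is properly infinite — contradicting the finiteness of $A$. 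So $\|rar\|\ge 1-\epsilon$, completing the verification. The main obstacle is precisely this last step: one must choose $a_1$ genuinely Cuntz-strictly below $d$ (which needs $D$ to be infinite dimensional, hence needs $A$ infinite dimensional) and then exploit finiteness of $A$ to forbid the proper infiniteness forced by a failure of (4$'$); everything else is bookkeeping with $\precsim_A$.
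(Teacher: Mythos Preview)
Your argument is correct. The underlying mechanism is the same as the paper's: replace the given $a$ by a ``small'' nonzero positive element supported near the top of its spectrum, apply the hypothesis to that element, and use finiteness of $A$ to force the norm condition $(4')$. The difference is packaging. The paper invokes \cite[Lem.~2.9]{Phillips2014Large} as a black box, which hands you a nonzero $y\in\overline{aAa}$ with the property that $g\precsim_A y$ (for $0\le g\le 1$) already implies $\|(1_A-g)a(1_A-g)\|>1-\epsilon$; one then simply takes $g=1_A-r$. You instead manufacture such an element by hand: your $a_1$ plays the role of Phillips's $y$, your key lemma ``$\|rar\|<1-\epsilon\Rightarrow d\precsim_A 1_A-r$'' is the contrapositive norm-to-comparison step, and the orthogonal companion $a_2$ together with simplicity and finiteness supplies the contradiction via proper infiniteness. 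In effect you have reproved the relevant special case of Phillips's lemma inline. The paper's route is shorter and cleaner to read; yours is more self-contained and makes transparent exactly where finiteness and infinite-dimensionality are consumed.
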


\begin{proof}
	Since $A$ is finite and unital, apply \cite[Lem.~2.9]{Phillips2014Large} to the given $A$, $a$ and $\epsilon$,
	obtaining a nonzero positive element $y \in \overline{aAa}$ such that
	whenever $g \precsim_A y$ for $g \in A$ with $0 \leq g \leq 1$,
	then $\|(1_A-g)a(1_A-g)\| > 1 - \epsilon$.
	
	Apply the hypothesis with $y$ in place of $a$ and everything else as given,
	getting $C^*$-subalgebra $B \subset A$ and $r \in B_+$ with $\|r\| \leq 1$.
	We have $1_A - r \precsim_A y \precsim_A a$ and
	$\|rar\| = \| (1_A - (1_A - r)) a (1_A - (1_A - r))\|> 1 - \epsilon$
	from the choice of $y$.
\end{proof}

The following proposition slightly strengthens condition (1) in Defintion~\ref{def_GTAOmega}.

\begin{proposition}\label{norm_cond}
	Let $\Omega$ be a class of $C^*$-algebras and $A$ be a unital $C^*$-algebra in $\mathrm{GTA\Omega}$.
	Then for any $m \in \mathbb{Z}_{>0}$, any $x_1,\cdots,x_m \in A$,
	any positive element $a \in A_+$ with $\|a\| = 1$,
	there exists $y_1, \cdots, y_m \in A$,
	a $C^*$-subalgebra $B \subset A$ with $B \in \Omega$,
	an element $r \in B_+$ with $\|r\| \leq 1$, such that
	\begin{enumerate}[(1)]
	\item $\|y_j - x_j\| < \epsilon$ for all $j = 1,\cdots,m$,
	\item $ry_j \in B$, $y_j r \in B$ for all $j = 1,\cdots,m$,
	\item $\| r x_j - x_j r\| < \epsilon$ for all $j = 1, \cdots, m$,
	\item $1_A - r \precsim_A a$, and
	\item $\| rar\| \geq 1 - \epsilon$.
	\end{enumerate}
\end{proposition}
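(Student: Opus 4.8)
\noindent\emph{Proof strategy.} The plan is to apply Definition~\ref{def_GTAOmega} a single time with a very small tolerance, to replace the positive element it yields by two carefully matched continuous functions of it, and then to run a projection-free analogue of the usual corner-compression to manufacture $y_1,\dots,y_m$.

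First, given $\epsilon>0$, fix a small $\eta\in(0,\tfrac12)$, let $h\colon[0,1]\to[0,1]$ be $0$ on $[0,\eta]$, $1$ on $[1-\eta,1]$ and linear in between, and let $k(t)=\min(1,t/\eta)$, so that $k(0)=0$ and $hk=h$. After also fixing a polynomial $Q$ with $Q(0)=0$ that approximates $k$ uniformly well, apply Definition~\ref{def_GTAOmega} to $A$ with a tolerance $\epsilon'$ (chosen small with respect to $\eta$, to $\|Q-k\|_{C[0,1]}$, to $(\sum_l|Q_l|)^{-1}$ and to $1\vee\max_j\|x_j\|$), with $F=\{x_1,\dots,x_m\}$, with the given $a$ (already normalised), and with $b=1_A$. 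This produces $B\in\Omega$ and $r_0\in B_+$ with $\|r_0\|\le1$, $r_0x_j,x_jr_0\in_{\epsilon'}B$, $\|r_0x_j-x_jr_0\|<\epsilon'$, $(1_A-r_0-\epsilon')_+\precsim_A a$ and $\|r_0ar_0\|\ge1-\epsilon'$. Now set $r:=h(r_0)\in B_+$ and $e:=k(r_0)\in B_+$ (both in $B$ because $h(0)=k(0)=0$, both of norm $\le1$); the point is $re=er=(hk)(r_0)=h(r_0)=r$, hence $r(1-e)=(1-e)r=0$.

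Because $k(0)=0$, the polynomial estimate together with $r_0^{\,l}x_j=r_0^{\,l-1}(r_0x_j)$ and $r_0x_j\in_{\epsilon'}B$ shows $ex_j$, $x_je$ and $ex_je$ all lie within a small $\delta_1$ of $B$. Fix $\hat b_j\in B$ approximating the combination $ex_j+x_je-ex_je$ to within $3\delta_1$ (e.g.\ $\hat b_j=\hat\beta_j+\hat\gamma_j-\hat\beta_je$ with $\hat\beta_j,\hat\gamma_j\in B$ approximating $ex_j,x_je$) and put
\[
  y_j:=(1-e)\,x_j\,(1-e)+\hat b_j .
\]
The algebraic identity $x_j=(1-e)x_j(1-e)+ex_j+x_je-ex_je$ then gives $\|y_j-x_j\|<3\delta_1<\epsilon$, which is (1), and $r(1-e)=(1-e)r=0$ gives $ry_j=r\hat b_j\in B$ and $y_jr=\hat b_jr\in B$, which is (2). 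The remaining items involve only $r$ and $r_0$: (3) is $\|h(r_0)x_j-x_jh(r_0)\|<\epsilon$, which holds for $\epsilon'$ small by continuity of the functional calculus in an almost-commuting variable; (5) follows from $\|h(r_0)-r_0\|\le\|h-\mathrm{id}\|_{C[0,1]}=\eta$, whence $\|rar\|\ge\|r_0ar_0\|-2\eta>1-\epsilon$; and (4) follows from $1_A-r=(1-h)(r_0)$ being a positive element of the commutative $C^*$-algebra generated by $r_0$ and $1_A$ whose support $\{t\in\sigma(r_0):h(t)<1\}\subseteq[0,1-\eta)$ lies inside the support $[0,1-\epsilon')$ of $(1_A-r_0-\epsilon')_+$ as soon as $\epsilon'\le\eta$, so $1_A-r\precsim_A(1_A-r_0-\epsilon')_+\precsim_A a$.

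I expect the only real obstacle to be condition (2): obtaining \emph{exact} membership $ry_j,y_jr\in B$ while still having $\|y_j-x_j\|<\epsilon$. Since $r_0$ carries no spectral gap, no projection is on hand and one cannot literally compress $x_j$ by a corner. Both difficulties are circumvented by the two devices above: letting $r$ vanish on a whole neighbourhood of $0$ in $\sigma(r_0)$ (so that a genuine, non-idempotent continuous function $e=k(r_0)$ can satisfy $re=er=r$ on the nose), and approximating the inclusion--exclusion combination $ex_j+x_je-ex_je$ rather than $ex_je$ alone (so that $y_j-x_j$ collapses into three individually small terms). One must also keep the quantifiers in order --- first $\eta$, then the polynomial $Q$, and only afterwards the tolerance $\epsilon'$ passed to Definition~\ref{def_GTAOmega}.
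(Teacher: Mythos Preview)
Your proof is correct and follows essentially the same architecture as the paper's: apply Definition~\ref{def_GTAOmega} once with a small tolerance to obtain $r_0$, replace $r_0$ by a continuous function $r=h(r_0)$ that admits a genuine local unit $e\in C^*(r_0)$ with $re=r$, and then split each $x_j$ into a piece annihilated by $r$ plus a piece approximable inside $B$. The only tactical differences are that the paper uses the square-root compression $(1-f_1(r_0))^{1/2}x_j(1-f_1(r_0))^{1/2}$ together with the division trick $h(r_0)r_0=f_1(r_0)$ (so that $h(r_0)b_j\approx f_1(r_0)x_j$ lands in $B$), whereas you use the exact inclusion--exclusion identity $x_j=(1-e)x_j(1-e)+ex_j+x_je-ex_je$ and a direct polynomial approximation of $k$; both routes achieve the same exact membership $ry_j,\,y_jr\in B$.
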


\begin{proof}
	Let $1 > \epsilon > 0$.
	Let \[ f'(t) = \begin{cases}
			0, \quad & t \leq \frac{\epsilon}{3} ; \\
			\text{linear}, \quad & \frac{\epsilon}{3} \leq t \leq 1 - \frac{\epsilon}{3} ;\\
			1, \quad & t \geq 1 - \frac{\epsilon}{3} \,.
		\end{cases}
	\]
	So that $f' \in C_0(0,1]$, $f'(t) \approx_{\epsilon / 3} t$ and $1 - f'(t) = f'(1 - t)$ for $0 \leq t \leq 1$.
	Let $f_1 (t)$ be the unit of $f'(t)$ such that $f_1 f' = f' f_1 = f'$.
	For example one may take \[f_1(t) = \begin{cases}
			0, \quad & t \leq \frac{\epsilon}{6} \\
			\text{linear}, \quad & \frac{\epsilon}{6} \leq t \leq \frac{\epsilon}{3} \\
			1, \quad & t \geq \frac{\epsilon}{3} \,.
	\end{cases}\]
	Let $h \in C_0(0,1]$ such that $th(t) = f_1(t)$.
	Applying \cite[Lem.~2.5.11 (1)]{Lin2001introduction} to $\epsilon/3$ and $f(t) = (1 - f_1(t))^{1/2}$,
	there exists $\delta > 0$ such that for any elements $u \in A$ and $v \in A_+$ with $\|v\| \leq 1$,
	if $\|uv - vu\| < \delta$, then $\|uf(v) - f(v)u\| < \epsilon$.
	
	Apply the definition with $F = \{x_1, \cdots, x_m\}$ and
	with $\min\{\delta, \epsilon/3\}$ in place of $\epsilon$,
	getting $r_0 \in B_+$ with $\|r_0\| \leq 1$ such that $1_A - r_0 \precsim_A a$,
	and $\|r_0\|^2 \geq \| r_0 a r_0\| / \|a\| \geq 1 - \epsilon/3$.
	Let $r = f'(r_0) \in \mathrm{C}^*(r_0) \subset B_+$. Then $\|r\| \leq 1$, $r = f'(r_0) \precsim_A r_0$
	and $r = 1_A - f'(1_A - r_0)$.
	Therefore $1_A - r = f'(1_A - r_0) \precsim_A 1_A - r_0 \precsim_A a$.
	
	Since $r_0 x_j \in_{\epsilon/3} B$ and $r_0 x_j \approx_{\epsilon / 3} x_j r_0$,
	there exists $b_j \in B$ such that $b_j \approx_{\epsilon / 3} r_0 x_j$ and $b_j \approx_{2\epsilon/3} x_j r_0$.
	Set \[
		y_j = h(r_0) b_j + (1_A - f_1(r_0))^{1/2} x_j (1_A - f_1(r_0))^{1/2} \,. \]
	To verify $r y_j \in B$,
	note that $h(r_0)b_j \in B$ and $f'(r_0)(1_A - f_1(r_0)) = 0$.
	Similarly we have $y_j r \in B$.
	Condition (2) is verfied.
	
	To check $\| y_j - x_j\| < \epsilon$, note that
	\begin{align*}
		y_j & = h(r_0)b_j + (1_A - f_1(r_0))^{1/2} x_j (1_A - f_1(r_0))^{1/2} \\
		& \approx_{\epsilon / 3} f_1(r_0) x_j + (1_A - f_1(r_0))^{1/2} x_j (1_A - f_1(r_0))^{1/2} \\
		& \approx_{2\epsilon/3} f_1(r_0) x_j + (1_A - f_1(r_0)) x_j \\
		& \approx_{2\epsilon/3} x_j \,.
	\end{align*}
	Finnally, since $\| r - r_0\| < \epsilon /3$, we have \begin{align*}
		\| rar - r_0 a r_0\| & \leq \|rar - r_0 a r\| + \| r_0 a r - r_0 a r_0\| \\
		& \leq \| r - r_0\| \|a\| \|r\| + \|r_0\| \|a\| \|r - r_0\| \\
		& \leq \frac{2\epsilon}{3} \|a\| \,.
	\end{align*}
	Therefore $\|rar\| \geq \|r_0 a r_0\| - 2\epsilon / 3 \geq 1 - \epsilon$.
\end{proof}

\section{An Example}\label{example_sec}

Niu's tracial approximation theorem \cite[Thm.~3.9]{Niu2019radius}
provide us an example of generalized tracial approximation.
\cite{EFF2022Generalized} mentioned it after Definition 1.1 without an explicite
explaination.

\begin{proposition}
	Let $(X,\Gamma)$ be a free minimal topological dynamical system,
	where $X$ is a compact Hausdorff space and
	$\Gamma$ is an infinite countable discrete amenable group.
	If $(X,\Gamma)$ has the (URP) and (COS) in the sense of \cite[Def.~3.1, 4.1]{Niu2019radius},
	then the C*-algebra $A := C(X) \rtimes \Gamma$ belongs to the class ${\rm GTA}\Omega$,
	where \begin{equation*}
	\Omega = \Big\{
	\bigoplus_{s=1}^N \mathrm{M}_{K_s} (C_0(Z_s)) :
	\, N \in \mathbb{Z}_{>0},
	\text{each } Z_i \text{ is locally compact Hausdorff space }
	\Big\} \,.
	\end{equation*}
\end{proposition}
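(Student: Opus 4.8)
The plan is to deduce membership in $\mathrm{GTA}\Omega$ from the Proposition above that, for finite $A$, reduces it to conditions (1)--(3), by feeding that Proposition the output of Niu's tracial approximation theorem \cite[Thm.~3.9]{Niu2019radius}. First I would record the structural facts. Since $(X,\Gamma)$ is free and minimal, $A=C(X)\rtimes\Gamma$ is simple; since $X$ is compact, $A$ is unital; since $\Gamma$ is amenable and acts on the compact space $X$, there is a $\Gamma$-invariant Borel probability measure on $X$, hence a tracial state on $A$, so the simple unital algebra $A$ is finite; and since $\Gamma$ is infinite, $A$ (it contains $C^*_r(\Gamma)$) is infinite-dimensional. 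Thus, invoking the finite-case Proposition above, it suffices to produce, for every $\epsilon>0$, every finite $F\subset A$ and every non-zero $a\in A_+$, a $C^*$-subalgebra $B\subset A$ with $B\in\Omega$ and an element $r\in B_+$ with $\|r\|\le 1$ such that $rx,xr\in_{\epsilon}B$ and $\|rx-xr\|<\epsilon$ for all $x\in F$, and $1_A-r\precsim_A a$.

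These three conditions are exactly what \cite[Thm.~3.9]{Niu2019radius} delivers once $(X,\Gamma)$ has (URP) and (COS). Unwinding that theorem: the uniform Rokhlin property produces, after choosing the tower heights large relative to the finite set of group elements occurring in an approximation of $F$ by finite sums $\sum_\gamma f_\gamma u_\gamma$, a castle of open Rokhlin towers over base sets $Z_1,\dots,Z_N$, which, being open subsets of the compact Hausdorff space $X$, are locally compact Hausdorff; the associated cut-down $C^*$-subalgebra is canonically identified with $B=\bigoplus_{s=1}^{N}\mathrm{M}_{K_s}(C_0(Z_s))$, so $B\in\Omega$. Taking $r\in B_+$ to be the sum over towers and levels of bump functions that are $1$ on the bulk of each level and vanish towards its boundary, the usual band-matrix estimates on the towers give $rx,xr\in_{\epsilon}B$ and $\|rx-xr\|<\epsilon$ for $x\in F$, while $1_A-r$ lies in $C(X)$ and is dominated by the indicator of an open remainder $E'$ of arbitrarily small value under every $\Gamma$-invariant measure. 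Finally, since minimality makes the restriction of any tracial state of $A$ to $C(X)$ a fully supported invariant measure, one may choose a nonempty open $U$ with $1_U\precsim_A a$ by a standard argument for minimal crossed products, arrange $\overline{E'}\subset U$ with $E'$ small enough, and apply (COS) --- Cuntz comparison of open sets --- to conclude $1_A-r\precsim_A 1_{E'}\precsim_A 1_U\precsim_A a$; in fact this comparison step is already built into the conclusion of \cite[Thm.~3.9]{Niu2019radius}. With conditions (1)--(3) of the finite-case Proposition verified, that Proposition yields $A\in\mathrm{GTA}\Omega$.

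I expect the work to be bookkeeping, with three points that need care. The first is matching the building blocks of \cite[Thm.~3.9]{Niu2019radius}, stated in the language of Rokhlin towers and orbit capacity, with the class $\Omega$: because the tower bases are in general only open, the blocks are the non-unital algebras $\mathrm{M}_{K_s}(C_0(Z_s))$ and the cut-down $r$ is genuinely a positive contraction rather than a projection, which is precisely the flexibility built into Definition~\ref{def_GTAOmega}. The second is the approximate-commutation and approximate-containment estimates, which require the tower heights to dominate the group elements appearing in $F$ and are a standard Rokhlin-tower computation rather than a measure estimate. The third, and the only step where (COS) is essential, is the passage from ``$E'$ has small invariant measure'' to ``$1_{E'}\precsim_A a$'' for an a priori arbitrary positive element $a$; the uniform Rokhlin property alone controls only the measure of the remainder, and it is (COS) that turns this into a Cuntz subequivalence. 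Once these are settled, no estimate beyond the two results already quoted is needed.
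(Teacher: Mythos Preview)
Your overall strategy---reduce to conditions (1)--(3) via the finite-case Proposition, then feed in Niu's tracial approximation theorem and close with (COS)---matches the paper's. Your argument for finiteness of $A$ (existence of an invariant probability measure, hence a trace) is in fact more elementary than the paper's appeal to stable rank one.

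The gap is in your treatment of the Cuntz comparison $1_A-r\precsim_A a$. First, the claim that one can ``arrange $\overline{E'}\subset U$'' is not justified: the Rokhlin remainder has small invariant measure but is not localizable inside a preassigned small open set, so set-theoretic containment is unavailable. Second, and more substantively, (COS) in \cite{Niu2019radius} is stated in a $(\lambda,m')$-form: a measure inequality yields only $1_{E'}\precsim_A b\otimes 1_{m'}$, i.e.\ comparison with $m'$ copies, not a direct subequivalence. The paper absorbs this factor by first using \cite[Lem.~4.7]{Niu2019radius} to find $m'$ mutually orthogonal, pairwise Cuntz-equivalent positive elements $b_1,\dots,b_{m'}$ inside $\overline{aAa}$, then \cite[Lem.~4.2]{Niu2019radius} to drop to a nonzero $b\in C(X)_+$ with $b\precsim_A b_1$, and finally chooses $\delta\le\lambda\inf_\mu\mu(b^{-1}(0,\infty))$ before invoking Theorem~3.9; the output of $(\lambda,m')$-(COS) then reads $1_A-r\precsim_A b\otimes 1_{m'}\precsim_A b_1+\cdots+b_{m'}\precsim_A a$. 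Your sketch elides exactly this step, and your fallback assertion that ``this comparison step is already built into the conclusion of \cite[Thm.~3.9]{Niu2019radius}'' is not correct: that theorem outputs only the measure bound $\mu(X\setminus p^{-1}(1))<\delta$, and the passage to Cuntz subequivalence must be done separately via (COS) together with the $m'$-copies bookkeeping above. A minor additional point: the paper takes $r=p^2$ rather than $r=p$, so that $rx\approx p f'_i p\in C$ follows from the hypothesis $pf'_ip\in C$; with $r=p$ one only gets $pf'_i$, which need not lie in $C$.
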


\begin{proof}
	Suppose that $(X,\Gamma)$ satisfies $(\lambda,m')$-(COS).
	For any finite subset $F = \{f_1 , f_2 , \cdots, f_n\}$ of $A$,
	$\epsilon > 0$ and $a \in A_+ \setminus \{0\}$,
	we always take $1_A$ to be the unit of $C^{\dag}$.
	By \cite[Lem.~4.7]{Niu2019radius}
	there exists mutually orthogonal nonzero elements
	$b_1 , \cdots , b_{m'} \in (\overline{aAa})_+$
	such that $b_1 \sim_A \cdots \sim_A b_{m'}$.
	By \cite[Lem.~4.2]{Niu2019radius},
	we obtain $b \in C(X)_+ \setminus \{0\}$ such that $b \precsim_A b_1$.
	
	Applying \cite[Thm.~3.9]{Niu2019radius} with $m=1$,
	$F = \{f_1 , \cdots , f_n\}$, $h = 1_A$,
	$$\delta = \min\{\epsilon/2 , \lambda \inf_{\mu} \mu(b^{-1}(0,\infty))\} \,,$$
	there are $p \in C(X)_+$ (not nesssarily a projection) with $\|p\| \leq 1$,
	a $C^*$-subalgebra $C \subset A$
	with $C \cong \bigoplus_{s=1}^N \mathrm{M}_{K_s}(C_0(Z_s))$
	and	$f'_1, \cdots , f'_n$ in $A$
	such that
	\begin{enumerate}[(i)]
		\item $\|f_i - f'_i\| < \delta$ for $i=1,\cdots,n$;
		\item $\| p f_i - f_i p\| < \delta$ for $i=1,\cdots,n$;
		\item $p \in C$, $p f'_i p \in C$ for $i=1,\cdots,n$;
		\item $\mu(X \setminus p^{-1}(1)) < \delta$
		for all $\mu \in \mathcal{M}_1(X,\Gamma)$.
	\end{enumerate}
	Let $B = C$ and $r = p^2$.
	Then condition (1') and (2') in Definition \ref{def_WTAOmega}
	can be easily deduced from (i), (ii) and (iii).
	Condition (iv) implies that
	$\mu ((1 - r)^{-1}(0,\infty)) < \lambda \mu(b^{-1}(0,\infty))$
	for all invariant Borel measure $\mu$.
	Therefore $1_A - r \precsim_A b \otimes 1_{m'}$ by $(\lambda,m')$-(COS).
	Thus $1_A - r \precsim_A b_1 \oplus \cdots \oplus b_{m'} \sim_A
	b_1 + \cdots + b_{m'} \precsim_A a$.
	This verifies condition (3').
	Condition (4') can be omitted when $A$ is finite (Prop.~\ref{norm_cond}).
	In fact $A$ has stable rank one by \cite[Thm.~7.8]{NL2020stable}.
\end{proof}

\begin{remark}
	It is true that $A$ belongs to the class ${\rm WTA}\Omega_1$,
	where $\Omega_1 = \{\tilde{C} : C \in \Omega\}$.
	In fact, if $A$ is unital and $A \in {\rm GTA}\Omega$,
	then $A \in {\rm WTA}\Omega_1$;
	if $A$ is non-unital and $A \in {\rm GTA}\Omega$,
	then $A^{\dagger} \in {\rm WTA}\Omega^\dagger$,
	where $\dagger$ means that we add a new unit even if $A$ or algebras in $\Omega$ are already unital.
\end{remark}

\end{document}